\newtheorem{theorem}{Theorem}
\newtheorem*{theorem*}{Theorem}
\newtheorem{lemma}[theorem]{Lemma}
\theoremstyle{definition}
\newenvironment{remark}[1][Remark.]{\begin{trivlist}
\item[\hskip \labelsep {\bfseries #1}]}{\end{trivlist}}
\newenvironment{remarks}[1][Remarks.]{\begin{trivlist}
\item[\hskip \labelsep {\bfseries #1}]}{\end{trivlist}}
\newcommand{\infin}{[\mathbb{N}]}
\newcommand{\nn}{\mathbb{N}}
\newcommand{\ee}{\varepsilon}
\newcommand{\vvv}{\mathcal{V}}
\newcommand{\uuu}{\mathcal{U}}
\begin{document}

\title{Almost isometric constants for partial unconditionality}
\author{R. M. Causey}

\address{Department of Mathematics\\ University of South Carolina\\
Columbia, SC 29208\\ U.S.A.} \email{causey@math.sc.edu}

\author{S. J. Dilworth}

\address{Department of Mathematics\\ University of South Carolina\\
Columbia, SC 29208\\ U.S.A.} \email{dilworth@math.sc.edu}

\begin{abstract} We discuss optimal constants of certain projections on subsequences of weakly null sequences.  Positive results yield constants arbitrarily close to $1$ for Schreier type projections, and arbitrarily close to $1$ for Elton type projections under the assumption that the weakly null sequence admits no subsequence generating a $c_0$ spreading model.  As an application, we prove that a weakly null sequence admitting a spreading model not equivalent to the $c_0$ basis has a quasi-greedy subsequence with quasi-greedy constant arbitrarily close to $1$.

\end{abstract}

\subjclass[2010]{46B15, 41A65}
\thanks{The second author was supported by the National Science Foundation under Grant Number DMS-1361461}

\maketitle

\section{Introduction}

A standard result in Banach space theory, due to Bessaga and Pe\l czy\'nski, \cite{BP}, is that every seminormalized, weakly null sequence admits a basic subsequence with basis constant arbitrarily close to $1$.  Maurey and Rosenthal \cite{MR} famously gave an example of a normalized, weakly null sequence admitting no unconditional subsequence.  Since then, significant attention has been paid to notions of partial unconditionality of subsequences of weakly null sequences.  Two examples of such notions, examples which will be the focus of this work, are as follows: Given a seminormalized, weakly null sequence $(x_n)$, for any $\ee>0$ and $q\in \nn$, there exists a subsequence $(x_{n_i})$ of $(x_n)$ so that for any scalars $(a_i)\in c_{00}$, the scalar sequences with finite support, and any set $E\subset \nn$ with $|E|\leqslant q$, $$\|\sum_{i\in E} a_i x_{n_i}\|\leqslant (1+\ee)\|\sum a_ix_{n_i}\|.$$  The first proof of this fact was given by Odell \cite{O}, wherein he applied a standard diagonalization procedure to obtain a subsequence $(x_{n_i})$ of $(x_n)$ so that for any scalar sequence $(a_i)\in c_{00}$ and any set $E\subset \nn$ with $|E|\leqslant \min E$, $$\|\sum_{i\in E}a_i x_{n_i}\|\leqslant (2+\ee)\|\sum a_i x_{n_i}\|.$$  The latter property of the sequence $(x_{n_i})$ is called \emph{Schreier unconditionality}.  

Another mode of unconditionality was introduced by Elton \cite{E}, and it is known as \emph{Elton unconditionality} or \emph{near unconditionality}.  For $0<\delta <1$, a sequence $(x_n)$ is $\delta$- Elton unconditional provided that there exists a constant $K\geqslant 1$ such that for any $(a_n)\in c_{00}$ with $\sup_n |a_n|\leqslant 1$ and any $E\subset \{n: |a_n|\geqslant \delta\}$, $$\|\sum_{i\in E} a_i x_i\|\leqslant K\|\sum a_ix_i\|.$$  Elton \cite{E} showed that for each $\delta\in (0,1)$, every seminormalized, weakly null sequence admits a subsequence which is $\delta$-Elton unconditional with constant $K=K(\delta)$ depending only on $\delta$.   

The focus of this work is to combine Schreier unconditionality with monotonicity to show that
every semi-normalized weakly null sequence admits a subsequence
for which certain projections have norm 
arbitrarily close to $1$. An analogous result which combines Elton unconditionality and monotonicity is proved for a certain class of weakly null sequences.
 
 As an application we show that the members of this class of sequences admit quasi-greedy subsequences with quasi-greedy constant arbitrarily close to $1$,
which improves \cite[Theorem 5.4]{DKK}.

We are ready to state the main theorem.  

\begin{theorem} Let $(x_n)$ be a seminormalized, weakly null sequence.  \begin{enumerate}[(i)]\item For any $\ee>0$ and any sequence $(q_i)_{i\geqslant 0}$ of natural numbers, there exists a subsequence $(y_n)$ of $(x_n)$ so that for any $n\in \nn$, any set $E\subset \nn$ with $n<\min E$ and $|E|\leqslant q_n$, and any $(a_i)\in c_{00}$,  $$\|\sum_{i=1}^n a_i y_i+ \sum_{i\in E} a_i y_i\|\leqslant (1+\ee) \|\sum a_iy_i\|.$$  \item Suppose that no subsequence of $(x_n)$ generates a spreading model equivalent to the canonical $c_0$ basis.  For any $\ee>0$ and sequence $(\delta_n)_{n\geqslant 0}\subset (0,1)$, there exists a subsequence $(y_n)$ of $(x_n)$ so that for any scalars $(a_i)\in c_{00}$ such that $\sup_{i\in \nn}|a_i|\leqslant 1$, any $n\in \nn$, and any set $E\subset \{i\in \nn: n<i, |a_i|\geqslant \delta_n\}$, $$\|\sum_{i=1}^n a_iy_i + \sum_{i\in E} a_iy_i\|\leqslant (1+\ee)\|\sum a_iy_i\|.$$ \end{enumerate}

\label{main theorem}
\end{theorem}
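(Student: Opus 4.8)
Both parts share a common architecture. The plan is first to apply the Bessaga--Pe\l czy\'nski theorem and pass to a basic subsequence with basis constant at most $1+\ee_0$ for a small $\ee_0=\ee_0(\ee)$; relabel it $(x_n)$, let $Y$ be its closed linear span, and write $P_I$ for the coordinate projection of $Y$ onto $\mathrm{span}\{x_i:i\in I\}$. Monotonicity, $\|P_{[1,m]}\|\le1+\ee_0$, will absorb the initial segment $\sum_{i=1}^n a_iy_i$; the work is to pass to a further subsequence for which the extra coordinates in $E$ cost almost nothing. That subsequence I would build by the standard weak-nullity diagonal construction underlying Schreier unconditionality, but with all error parameters small and summable so as to deliver the constant $1+\ee$ rather than $2+\ee$.

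Concretely, fix $\sigma_n\downarrow0$ and $\theta_j\downarrow0$ with $\sum_j\theta_j$ as small as desired. Having chosen $y_1,\dots,y_n$, set $F_n=\mathrm{span}\{y_1,\dots,y_n\}$; by compactness of $B_{F_n}$ pick a finite symmetric $\mathcal D_n\subset B_{X^*}$ that $(1-\sigma_n)$-norms $F_n$, enlarged to contain a norming functional of each $y_i$, $i\le n$. Using weak nullity, choose the next index and constrain all later choices so that $|\phi(y_j)|<\theta_j$ for every $\phi\in\mathcal D_n$ and every $j>n$; let $(y_n)$ be the resulting subsequence.

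To verify (i), fix $n$, a set $E$ with $n<\min E$ and $|E|\le q_n$, and scalars $(a_i)$ with $z:=\sum a_iy_i$ normalized so $\|z\|=1$; then $|a_i|\le c$ for a constant $c$ depending only on the basis constant and $\inf_i\|y_i\|$. With $u=P_{[1,n]}z$ (so $\|u\|\le1+\ee_0$) and $w=u+\sum_{i\in E}a_iy_i$, the crux is to produce a functional $g$ of norm $1+O(\ee)$ that nearly norms $w$ yet is nearly annihilated by the discarded coordinates $\{j>n:j\notin E\}$: the $F_n$-direction of such a $g$ is supplied by the $(1-\sigma_n)$-norming set $\mathcal D_n$, the (at most $q_n$) $E$-directions by a bounded-dimensional correction built from norming functionals of the $y_i$ with $i\in E$, and the remaining tail is controlled by the nullity estimates $|\phi(y_j)|<\theta_j$ summed against $|a_j|\le c$. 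Granting such a $g$, the chain $\|z\|\ge g(z)\ge\|w\|-O(\ee)$ gives $\|w\|\le(1+\ee)\|z\|$ after tuning the parameters. This is precisely the place where $n<\min E$ and $|E|\le q_n$ enter, and where one gains over the classical $2+\ee$.

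For (ii) the set $E=\{i>n:|a_i|\ge\delta_n\}$ need not be bounded, so the bounded-dimensional correction above is unavailable. I would instead first invoke Elton's theorem to pass to a $\delta$-Elton unconditional subsequence, and then exploit that no subsequence of $(x_n)$ generates a $c_0$ spreading model: by a standard spreading-model argument one obtains a further subsequence for which, for every threshold $\delta>0$, $\|\sum_{i\in F}a_iy_i\|\ge\rho(|F|)$ with $\rho(r)\to\infty$ as $r\to\infty$, whenever $\delta\le|a_i|\le1$ on $F$. Hence a large $E$ forces $\|z\|$ to be large, so the \emph{additive} errors coming from the net-and-nullity estimates --- which are summable, hence bounded independently of $|E|$ --- become negligible in \emph{relative} terms, while a small $E$ reduces to the bounded situation of (i); interpolating between the two regimes and treating the initial segment by monotonicity as before yields $1+\ee$. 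The main obstacle in both parts is exactly this near-isometric bookkeeping --- converting a norming functional for the projected vector into an almost-norming functional for the full vector at multiplicative cost $1+\ee$ rather than the additive cost the naive triangle inequality incurs --- which is made possible in (i) by $|E|\le q_n$ together with $n<\min E$, and in (ii) by the $c_0$-free hypothesis forcing $\|z\|$ to grow with $|E|$.
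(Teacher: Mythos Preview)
Your outline for (i) has a real gap at the point you yourself flag as ``the crux'': you never actually build $g$. The net $\mathcal D_n$ you construct consists of functionals that are small on \emph{every} $y_j$ with $j>n$, in particular on the coordinates in $E$; so any $\psi\in\mathcal D_n$ satisfies $\psi(w)\approx\psi(u)\le\|u\|$, not $\|w\|$. The phrase ``bounded-dimensional correction built from norming functionals of the $y_i$ with $i\in E$'' is not a construction: adding such functionals to $\psi$ destroys the smallness of $g(y_j)$ on the discarded indices $n<j\le\max E$, $j\notin E$, over which you have arranged no control; composing the norming functional $\phi$ for $w$ with the coordinate projection $P_{\{1,\dots,n\}\cup E}$ is circular, since $\|P_{\{1,\dots,n\}\cup E}\|$ is exactly what you are trying to bound; and Hahn--Banach extension of $\phi$ restricted to $\mathrm{span}\{y_i:i\in\{1,\dots,n\}\cup E\}$ gives no information on the discarded coordinates either. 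This is precisely why the classical net-and-nullity argument yields $2+\ee$ for Schreier unconditionality rather than $1+\ee$: the functional you can write down only norms the initial segment.

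The paper supplies the missing idea via a Ramsey step. Its Lemma~\ref{lemma2} shows, using the finite Ramsey theorem and weak nullity, that after passing to a subsequence one can take any $x^*\in B_{X^*}$ whose value-vector on $(y_i)_{i=1}^k$ lies in a prescribed cell $A\subset\mathbb B^k$ and whose value-vector on $(y_{r_i})_{i=1}^l$ lies in a prescribed cell $B\subset\mathbb B^l$, and replace it by $y^*\in B_{X^*}$ with values in the \emph{same} cells but with $|y^*(y_{r_0})|\le\ee$ at one additional index $r_0$. Iterating this over fine covers of $\mathbb B^k\times\mathbb B^l$ for all $l\le q_k$ (Lemma~\ref{lemma1}) lets one slide the norming functional for $w$ through the discarded indices one at a time, ending with an $x^*\in B_{X^*}$ that agrees with the original to within $\ee_k$ on $\{1,\dots,k\}\cup E$ and satisfies $|x^*(y_j)|\le\ee_j$ for every discarded $j$. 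That stabilization---not a net---is what produces the $1+\ee$ constant.

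For (ii) your two-regime plan is essentially the paper's Lemma~\ref{lemma3}, but two points need sharpening. First, the ``small $E$'' regime reduces to (i), so the gap above propagates. Second, in the ``large $E$'' regime you need $\|\sum_{i\in E}b_iy_i\|\le(1+\alpha)\|\sum_i b_iy_i\|$ with $\alpha$ arbitrarily small; Elton's original theorem only gives a constant $K(\delta)$, and the near-isometric version you need is the later Dilworth--Odell--Schlumprecht--Zs\'ak result, which the paper invokes explicitly.
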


In this work, $X$ will denote a Banach space over the real or complex scalars and $B_X$ will denote its closed unit ball. We  let $\mathbb{B}$ denote $[-1,1]$ in the real case or the closed  unit disk in the complex plane in the  complex case.  For each $n\in \nn$, $\mathbb{B}^n$ will be endowed with the $\ell_\infty^n$ metric.  If $N$ is an infinite subset of $\nn$, we let $[N]$ (resp. $[N]^n$) denote the infinite (resp. cardinality $n$) subsets of $N$.  We identify subsets of $\nn$ with strictly increasing sequences in $\nn$ in the natural way and let $\nn_0:=\nn\cup \{0\}$.

\section{Extensions of Schreier and Elton unconditionality}

Recall that a sequence $(x_n)$ in a Banach space has the sequence $(s_n)$ in a (possibly different) Banach space as a \emph{spreading model} provided that for each $k\in \nn$ and $\ee>0$, there exists $m=m(k, \ee)$ so that for any $m\leqslant n_1<\ldots < n_k$ and any scalars $(a_i)_{i=1}^k$, $$\Bigl|\|\sum_{i=1}^k a_i s_i\| - \|\sum_{i=1}^k a_i x_{n_i}\|\Bigr|<\ee.$$  We say $(x_n)$ \emph{generates} the spreading model $(s_n)$.  Recall also that every seminormalized, weakly null sequence has a subsequence which generates a $1$-suppression unconditional spreading model.

Theorem \ref{main theorem}$(i)$ will follow from the next lemma.

\begin{lemma} Let $X$ be a Banach space.  Given a weakly null sequence $(x_n)\subset B_X$, $(q_n)_{n\geqslant 0} \subset \nn$, and $(\ee_n)_{n\geqslant 0}\subset (0,\infty)$, there exist $1\leqslant n_1<n_2<\ldots$ with the following property: For any $k\in \nn_0$, $x^*_0\in B_{X^*}$, and any $\varnothing \neq E\subset \nn$ with $k<\min E$ and $|E|\leqslant q_k$, there exists a functional $x^*\in B_{X^*}$ so that \begin{enumerate}[(i)]\item for $i\in \{1, \ldots, k\}\cup E$, $|x^*_0(x_{n_i})- x^*(x_{n_i})|\leqslant \ee_k$, \item for $1\leqslant i\leqslant \max E$, $i\notin \{1, \ldots, k\}\cup E$, $|x^*(x_{n_i})|\leqslant \ee_i.$ \end{enumerate}

\label{lemma1}
\end{lemma}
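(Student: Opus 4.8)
The plan is to construct the sequence $(n_i)$ by a diagonalization driven by a compactness argument, in the spirit of Odell's original proof of Schreier unconditionality, but with the error tolerances refined to depend on the indices so as to yield constants near $1$. First I would fix, for each $k \in \nn_0$, a finite $\ee_k$-net $\{x^*_{k,1}, \dots, x^*_{k,N_k}\}$ of $B_{X^*}$ in the weak$^*$ topology restricted to the (relatively weak$^*$-compact) set of relevant evaluations — more precisely, what matters is approximating the values of functionals on the finitely many vectors already chosen. The key point driving the whole construction is that $(x_n)$ is weakly null: having chosen $x_{n_1}, \dots, x_{n_m}$, for \emph{any} finite collection of functionals we can pass to a tail of $(x_n)$ on which all of those functionals are uniformly small, as small as we like. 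So at stage $m+1$ I would choose $n_{m+1}$ large enough that every functional from a suitably chosen finite family (the family built from the nets at stages $\leqslant m$ applied to the vectors chosen so far) evaluates to something of size $\leqslant \ee_{m+1}$ on $x_{n_j}$ for all $j > m$ that will later be chosen — this is where one uses that a finite set of functionals, being weak$^*$-continuous, sends the weakly null sequence to a null scalar sequence.

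The main step is to verify the extension property for a given $k$, $x^*_0 \in B_{X^*}$, and $E$ with $k < \min E$, $|E| \leqslant q_k$. Here is the idea: choose $x^*_0$; its restriction to the finite set $\{x_{n_1}, \dots, x_{n_k}\} \cup \{x_{n_i} : i \in E\}$ is within $\ee_k$ of one of the net functionals $x^*_{k,j}$ fixed in advance at stage $k$ (this is why we built the net at stage $k$ to have mesh $\ee_k$ and why we only need to control finitely many coordinates). That net functional $x^* := x^*_{k,j}$ then automatically satisfies (i). For (ii), we need $|x^*(x_{n_i})| \leqslant \ee_i$ for each $i$ with $1 \leqslant i \leqslant \max E$ and $i \notin \{1,\dots,k\} \cup E$. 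Such an index $i$ satisfies either $k < i < \min E$ or $i$ lies in a gap of $E$, so in all cases $i > k$; hence $x_{n_i}$ was chosen at a stage strictly after the net at level $k$ was fixed, and by the construction we arranged exactly that all level-$k$ net functionals are $\leqslant \ee_i$ on $x_{n_i}$. So (ii) holds by design.

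The one genuine obstacle is bookkeeping the quantifiers: at stage $k$ we must fix a finite net of functionals, but we do not yet know $x_{n_i}$ for $i > k$, and we need those future vectors to be small on the level-$k$ net functionals. This is handled by interleaving: at each stage $m+1$ we enforce smallness (bound $\ee_{m+1}$, and more safely $\min_{i \geqslant m+1}\ee_i$ is not available, so one uses $\ee_{m+1}$ together with monotonicity or simply records the requirement against all earlier levels $k \leqslant m$) of $x_{n_{m+1}}$ against the \emph{union} of all net-functional families fixed at levels $0, 1, \dots, m$. Since that union is finite and $(x_n)$ is weakly null, a sufficiently large $n_{m+1}$ exists. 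One should be slightly careful that the net at level $k$ depends on which vectors $x_{n_1}, \dots, x_{n_k}$ were chosen, so the net must be fixed only after those $k$ vectors are in hand, i.e. at stage $k$ of the recursion; this is consistent with the interleaving since stage $m+1 > k$ comes later. With the recursion set up this way, the two conclusions (i) and (ii) fall out immediately as indicated, completing the proof.
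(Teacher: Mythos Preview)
Your approach has a genuine gap in the verification of (i) on the indices $i\in E$. At stage $k$ you fix a finite net $\{x^*_{k,j}\}$ of $B_{X^*}$ relative to the vectors $x_{n_1},\ldots,x_{n_k}$ already chosen, and at each later stage $m+1>k$ you choose $n_{m+1}$ so that every functional in $\bigcup_{\ell\leqslant m}\{x^*_{\ell,j}\}$ is $\ee_{m+1}$-small on $x_{n_{m+1}}$. Given $x^*_0$, $k$, and $E$, you then set $x^*=x^*_{k,j}$ from the stage-$k$ net. But that net can only approximate $x^*_0$ on $x_{n_1},\ldots,x_{n_k}$, since the vectors $x_{n_i}$ with $i\in E$ were not yet known when the net was fixed. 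Worse, your own construction forces $|x^*(x_{n_i})|\leqslant \ee_i$ for every $i>k$, including $i\in E$; so (i) on $E$ would require $|x^*_0(x_{n_i})|$ itself to be small, which is false in general. Switching to the net at stage $\max E$ repairs (i) but destroys (ii), since those later net functionals are only guaranteed small on vectors chosen after stage $\max E$, not on the gap indices between $k$ and $\max E$.

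The essential difficulty is that the gap indices and the $E$-indices are interleaved: the functional $x^*$ must \emph{match} $x^*_0$ on some vectors chosen after stage $k$ (those with index in $E$) while being \emph{small} on other vectors chosen after stage $k$ (those in the gaps). A functional fixed before any of those vectors were selected cannot distinguish the two kinds. The paper resolves this with a Ramsey step (Lemma~2): after passing to a subsequence, whenever some functional realizes a prescribed approximate pattern of values on the coordinates $\{1,\ldots,i\}$ together with $l$ later coordinates, then there is also a functional realizing the \emph{same} approximate pattern which is additionally small on any single intermediate coordinate. The subsequence in Lemma~\ref{lemma1} is built so that this holds for all relevant $i$, $l$, and discretized patterns; then, given $x^*_0$, $k$, $E$, one walks from $k$ to $\max E$ applying Lemma~2 at each gap index, producing functionals $x^*_k=x^*_0, x^*_{k+1},\ldots,x^*_{\max E}=x^*$ that stay within $\delta_{i+1}$ of each other on $\{1,\ldots,k\}\cup E$ at step $i$, while $x^*_{i+1}$ becomes small on $x_{n_{i+1}}$ whenever $i+1$ is a gap. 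The telescoping sum of the $\delta_i$ gives (i), and the smallness at the gap steps plus the subsequent drift gives (ii). Your weak-nullity-plus-net scheme is essentially the trivial case $E=\varnothing$; the real content of the lemma is exactly the matching on $E$, and that requires the Ramsey argument.
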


For clarity, we isolate the following step of the proof of Lemma \ref{lemma1}.   

\begin{lemma} Suppose $k\in \nn_0$, $(y_i)_{i=1}^k\subset B_X$, and $(z_i)\subset B_X$ is weakly null. Fix $l\in \nn$ and $\ee>0$. If $A\subset \mathbb{B}^k$ and $B\subset \mathbb{B}^l$ are any sets, then there exists a subsequence $(z_i)_{i\in M}$ of $(z_i)$ so that if $x^*\in B_{X^*}$ and $r_0<\ldots < r_l$, $r_i\in M$, are such that $(x^*(y_i))_{i=1}^k\in A$ and $(x^*(z_{r_i}))_{i=1}^l\in B$, then there exists $y^*\in B_{X^*}$ so that $(y^*(y_i))_{i=1}^k\in A$, $(y^*(z_{r_i}))_{i=1}^l\in B$, and $|y^*(z_{r_0})|\leqslant \ee$.  
\label{lemma2}
\end{lemma}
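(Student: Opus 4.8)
The plan is to reduce the statement to a compactness argument on $\mathbb{B}^{k+l}$. Consider the set $K \subset \mathbb{B}^{k+l}$ consisting of all vectors of the form $((x^*(y_i))_{i=1}^k, (x^*(z_{r_i}))_{i=1}^l)$ where $x^* \in B_{X^*}$, $r_0 < r_1 < \ldots < r_l$ with $r_i \in \{1,2,\ldots\}$, and I keep only those $x^*$ for which additionally $|x^*(z_{r_0})| \leqslant \ee$. Heuristically, I want to arrange that whenever the ``visible coordinates'' $(x^*(y_i))_i$ and $(x^*(z_{r_i}))_{i=1}^l$ land in $A \times B$, I can perturb $x^*$ to a new functional that kills $z_{r_0}$ without moving those visible coordinates out of $A \times B$. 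Since $A$ and $B$ are arbitrary (not assumed closed), I cannot allow any perturbation at all: the new functional must agree exactly with the old one on $y_1,\ldots,y_k$ and on $z_{r_1},\ldots,z_{r_l}$. So the mechanism must be exact, not approximate, and the role of $\ee$ and of passing to a subsequence is to handle $z_{r_0}$ alone.

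First I would set up the right diagonalization. Enumerate a countable $\ee$-net of relevant data. More precisely, for the subsequence extraction it suffices to ensure the following: for every finite tuple $r_1 < \ldots < r_l$ in the eventual index set $M$ and every $x^* \in B_{X^*}$, and for every $r_0 \in M$ with $r_0 < r_1$, if $x^*$ restricted to $y_1,\ldots,y_k,z_{r_1},\ldots,z_{r_l}$ realizes a point of $A \times B$, then some $y^* \in B_{X^*}$ agrees with $x^*$ on all of $y_1,\ldots,y_k,z_{r_1},\ldots,z_{r_l}$ and has $|y^*(z_{r_0})| \leqslant \ee$. The key point is a Hahn--Banach / weak-$*$ compactness observation: given $x^*$ and the finite set $F = \{y_1,\ldots,y_k,z_{r_1},\ldots,z_{r_l}\}$, the set $C_F(x^*) = \{ y^* \in B_{X^*} : y^*|_F = x^*|_F \}$ is a nonempty weak-$*$ closed (hence weak-$*$ compact) convex subset of $B_{X^*}$. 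The functional $y^* \mapsto y^*(z_m)$ restricted to $C_F(x^*)$ is a weak-$*$ continuous affine function, and since $(z_m)$ is weakly null, for the \emph{fixed} compact set $C_F(x^*)$ we have $\sup_{y^* \in C_F(x^*)} |y^*(z_m)| \to 0$ as $m \to \infty$ — because a weakly null sequence converges to $0$ uniformly on weak-$*$ compact subsets of $X^*$? No: it converges pointwise, and by a standard argument (the map $m \mapsto (\text{evaluation at } z_m)$ is equicontinuous is false in general). I will instead use: for each \emph{individual} $y^* \in C_F(x^*)$, $y^*(z_m) \to 0$; combined with weak-$*$ compactness of $C_F(x^*)$ this does give $\sup_{C_F(x^*)} |y^*(z_m)| \to 0$ only if the family $\{z_m\}$ is relatively weakly compact, which it is, being weakly null. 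So by a Grothendieck-type / Eberlein argument the convergence is uniform on weakly compact sets of the predual side, giving exactly $\sup_{y^* \in C_F(x^*)} |y^*(z_m)| \to 0$.

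The diagonal construction then runs as follows. There are only countably many finite index tuples, but uncountably many $x^*$, so I cannot fix $x^*$ in advance. Instead, for a fixed finite tuple $\tau = (r_1 < \ldots < r_l)$ consider $\mathcal{C}_\tau = \{ C_{F_\tau}(x^*) : x^* \in B_{X^*} \}$, a family of weak-$*$ compact convex sets; what I actually need is $\sup_{y^* \in B_{X^*}} |y^*(z_m)|$ over $y^*$ lying in \emph{some} $C_{F_\tau}(x^*)$ with prescribed restriction — but that is just $\sup_{y^* \in B_{X^*}} \{ |y^*(z_m)| : y^*|_{F_\tau} \in (\text{that fiber}) \}$, and taking the worst fiber this is $\le \sup_{B_{X^*}} |y^*(z_m)|$, which need not go to zero. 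The fix: enumerate $F_\tau$ together with a finite $\ee/2$-net $\{v_1,\ldots,v_N\} \subset \mathbb{B}^{k+l}$ of $A \times B$ (using total boundedness of $\mathbb{B}^{k+l}$), and for each net point $v_j$ let $C_{\tau,j} = \{ y^* \in B_{X^*} : (y^*(y_i))_i, (y^*(z_{r_i}))_i) = v_j \}$ — a weak-$*$ compact (possibly empty) convex set not depending on any $x^*$. For each nonempty $C_{\tau,j}$, $\sup_{y^* \in C_{\tau,j}} |y^*(z_m)| \to 0$ as $m \to \infty$ by weak-$*$ compactness plus weak-nullity. Now diagonalize over the countably many $(\tau, j)$ to extract $M$ so that for each $r_0 \in M$ with $r_0$ below the tuple $\tau$, $\sup_{y^* \in C_{\tau,j}} |y^*(z_{r_0})| \leqslant \ee/2$ for all relevant $j$. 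Finally, given $x^*$ with $(x^*(y_i))_i, (x^*(z_{r_i}))_i) \in A \times B$, pick a net point $v_j$ within $\ee/2$ — but I need \emph{exact} membership in $A\times B$, so instead I should not net $A \times B$ at all and must accept the earlier uniform-convergence claim. I will therefore carry out the cleaner route: prove directly that for each finite tuple $\tau$, $\sup\{ |y^*(z_m)| : y^* \in B_{X^*}, \ (y^*(z_{r_i}))_{i=1}^l \in \mathbb{B}^l \text{ arbitrary}\}$ is the wrong quantity, and that the right quantity $g_\tau(m) := \sup_{x^* \in B_{X^*}} \inf_{y^* \in C_{F_\tau}(x^*)} |y^*(z_m)| \to 0$, using that $C_{F_\tau}(x^*)$ is weak-$*$ compact and $z_m \rightharpoonup 0$. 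Granting this, diagonalize: choose $M = \{m_1 < m_2 < \cdots\}$ so that $g_\tau(m_p) \to 0$ fast enough for every tuple $\tau$ formed from $\{m_{p+1}, m_{p+2}, \ldots\}$. Then for the given $x^*$ and $r_0 < r_1 < \ldots < r_l$ in $M$, taking $\tau = (r_1,\ldots,r_l)$ and the infimizing $y^* \in C_{F_\tau}(x^*)$ gives $(y^*(y_i))_{i=1}^k = (x^*(y_i))_{i=1}^k \in A$, $(y^*(z_{r_i}))_{i=1}^l = (x^*(z_{r_i}))_{i=1}^l \in B$, and $|y^*(z_{r_0})| \leqslant g_\tau(r_0) \leqslant \ee$, as required.

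The main obstacle, and the step to get exactly right, is the uniform-smallness claim $g_\tau(m) \to 0$: it hinges on the fact that $C_{F_\tau}(x^*)$ is a weak-$*$ \emph{compact} convex set and $(z_m)$ is weakly null, so that $\inf_{y^* \in C_{F_\tau}(x^*)} |y^*(z_m)|$ — which for each fixed $x^*$ tends to $0$ — does so uniformly in $x^*$. Here one uses that as $x^*$ ranges over $B_{X^*}$, the fiber $C_{F_\tau}(x^*)$ depends only on the finitely many values $x^*|_{F_\tau} \in \mathbb{B}^{k+l}$, a compact parameter space; the function $(x^*|_{F_\tau}, m) \mapsto \inf_{y^*} |y^*(z_m)|$ is then handled by an $\ee/2$-net on $\mathbb{B}^{k+l}$ together with an elementary estimate showing the infimum over a fiber is $(\ee/2)$-Lipschitz in the parameter, reducing uniformity to the finitely many net points and hence to countably many diagonalization conditions. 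Everything else is bookkeeping: the finite front part $(y_i)_{i=1}^k$ never moves because we fix $y^*|_{F_\tau}$ exactly, which is precisely why arbitrary (non-closed) $A$ and $B$ cause no trouble.
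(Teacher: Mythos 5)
Your approach is genuinely different from the paper's — they use the finite Ramsey theorem twice (coloring $l$-subsets by achievability of $A\times B$, then coloring $(l+1)$-subsets by existence of a "killing" functional $y^*$, and eliminating the bad branch by a $w^*$-cluster-point/weak-nullity argument) — but your argument has a gap that I don't see how to repair along your lines.

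The heart of your proposal is the quantity $g_\tau(m):=\sup_{x^*\in B_{X^*}}\inf_{y^*\in C_{F_\tau}(x^*)}|y^*(z_m)|$ together with the claim that $g_\tau(m)\to 0$ as $m\to\infty$ for each fixed $\tau$. Even granting that claim, it does not feed into the diagonalization the way you need. In the conclusion of the lemma you must kill $z_{r_0}$ where $r_0<r_1<\dots<r_l$, so $r_0$ lies \emph{before} every index of $\tau=(r_1,\dots,r_l)$. When you write ``choose $M=\{m_1<m_2<\cdots\}$ so that $g_\tau(m_p)\to 0$ fast enough for every tuple $\tau$ formed from $\{m_{p+1},m_{p+2},\ldots\}$,'' you are asking for $g_\tau(m_p)$ to be small with $m_p$ \emph{smaller} than (and fixed relative to) $\tau$, whereas the limit $g_\tau(m)\to 0$ is as $m\to\infty$ with $\tau$ fixed. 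These are opposite directions of the limit, and the claim you establish does not give the bound you use; indeed, the constraints defining the fiber $C_{F_\tau}(x^*)$ change with $\tau$, so there is no monotonicity or continuity that lets you transfer smallness from large $m$ to small $m$. The diagonalization would need a different claim, namely that for fixed $m$ the quantity $g_\tau(m)$ becomes small when $\min\tau$ is pushed far out, and you have not argued that.

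There is a second, independent soft spot: the uniformity in $x^*$. You reduce to the compact parameter $v=x^*|_{F_\tau}\in\mathbb{B}^{k+l}$ and propose to handle $\sup_v$ via an $\ee/2$-net plus an ``elementary estimate showing the infimum over a fiber is $(\ee/2)$-Lipschitz in the parameter.'' That Lipschitz estimate is false in general: near the boundary of the attainable set, the fiber $\{y^*\in B_{X^*}:y^*|_{F_\tau}=v\}$ can collapse to a single point (e.g.\ in Hilbert space when $v$ is extremal the Cauchy–Schwarz case of equality pins $y^*$ down), and the infimum $\inf_{y^*}|y^*(z_m)|$ then jumps with a square-root-type singularity as $v$ moves off the boundary; the map $v\mapsto\inf$ is not even continuous, let alone Lipschitz. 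So the net argument does not yield uniform smallness. The paper sidesteps both of these issues entirely: the Ramsey dichotomy handles the "for all $x^*$" quantifier without any uniform estimate, and the weak-nullity contradiction only needs a single $w^*$-cluster point of a sequence of witnesses $(x_j^*)$, with $|x_j^*(z_{m_i})|\geqslant\ee$ for $i\leqslant j$ — a much lighter use of weak nullity than the uniform decay you are trying to prove.
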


Here it should be understood that if $k=0$, the hypotheses and conclusions involving $(y_i)_{i=1}^k$ and $A$ should be omitted.  

The next lemma will easily yield Theorem \ref{main theorem}$(ii)$ after a standard diagonalization.  

\begin{lemma} Fix $n\in \nn_0$, $\ee>0$, and $\delta>0$.  If $(x_i)_{i=1}^n\subset X$ and $(y_i)\subset X$ is a semi-normalized, weakly null sequence generating a spreading model not equivalent to the canonical $c_0$ basis,  then there exists a subsequence $(z_i)$ of $(y_i)$ so that for all scalars $(a_i)_{i=1}^n$ and $(b_i)\in c_{00}$ with $|a_i|, |b_i|\leqslant 1$ for all appropriate $i$, and for all sets $E\subset \{i\in \nn: |b_i|\geqslant \delta\}$, $$ \|\sum_{i=1}^n a_ix_i + \sum_{i\in E} b_i z_i\|\leqslant (1+\ee) \|\sum_{i=1}^n a_ix_i + \sum b_iz_i\|.$$

\label{lemma3}
\end{lemma}

We use these lemmas to prove Theorem \ref{main theorem}, and then return to the proofs of the lemmas.

\begin{proof}[Proof of Theorem \ref{main theorem}]$(i)$ Let us assume $\ee<1$ and $(x_i)\subset B_X$ is $(1+\ee/2)$-basic. Fix $\eta>0$ such that $0<\eta^{-1}< \inf_n \|x_n\|$.  By \cite{O}, by passing to a subsequence of $(x_i)$, we may assume that for any $(a_i)\in c_{00}$, $\|(a_i)\|_{c_0}\leqslant \eta \|\sum a_ix_i\|$.   Fix $(\ee_k)_{k\geqslant 0}$ so that for each $k\in \nn_0$, $$ (k+q_k)\ee_k + \sum_{i=k+1}^\infty \ee_i < \ee/2\eta.$$ Without loss of generality, by Lemma \ref{lemma1} we may pass to a subsequence, relabel, and assume $(x_i)$ satisfies the conclusions of Lemma \ref{lemma1} with this choice of $(\ee_k)_{k\geqslant 0}$ and $(q_k)_{k\geqslant 0}$.  Choose $k\in \nn_0$, $E\subset \nn$ with $k<\min E$ and $|E|\leqslant q_k$, and $(a_i)\in c_{00}$.  If $E=\varnothing$, the fact that $(x_i)$ is $(1+\ee/2)$-basic gives the desired inequality.  So assume $E\neq \varnothing$.  Fix $x^*_0\in B_{X^*}$ so that $$\|\sum_{i=1}^k a_ix_i + \sum_{i\in E}a_ix_i\|=x^*_0\Bigl(\sum_{i=1}^k a_ix_i + \sum_{i\in E}a_ix_i\Bigr).$$  Choose $x^*$ as in the conclusion of Lemma \ref{lemma1}.  Then \begin{align*} (1+\ee/2)\|\sum a_ix_i\| & \geqslant \|\sum_{i=1}^{\max E}a_ix_i\|  \geqslant x^*\Bigl(\sum_{i=1}^{\max E}a_ix_i\Bigr) \\ & \geqslant x^*_0\Bigl(\sum_{i=1}^k a_ix_i + \sum_{i\in E}a_ix_i\Bigr) - \sum_{i=1}^k|a_i||(x^*_0-x^*)(x_i)|  \\ & - \sum_{i\in E}|a_i||(x^*_0-x^*)(x_i)| - \sum_{\underset{i\notin \{1, \ldots, k\}\cup E}{i=1}}^{\max E} |a_i||x^*(x_i)| \\ & \geqslant \|\sum_{i=1}^k a_ix_i + \sum_{i\in E}a_ix_i\| - \|(a_i)\|_{c_0}\Bigl((k + q_k)\ee_k - \sum_{i=k+1}^\infty \ee_i\Bigr) \\ & \geqslant \|\sum_{i=1}^k a_ix_i + \sum_{i\in E}a_ix_i\| - \eta\Bigl( (k+q_k)\ee_k + \sum_{i=k+1}^\infty \ee_i\Bigr)\|\sum a_ix_i\| \\ & \geqslant \|\sum_{i=1}^k a_ix_i + \sum_{i\in E}a_ix_i\| - (\ee/2)\|\sum a_ix_i\|. \end{align*}

Adding $(\ee/2)\|\sum a_ix_i\|$ to both sides finishes the proof.  
 
$(ii)$ This follows easily from recursive applications of Lemma \ref{lemma3}.  

\end{proof}

\begin{proof}[Proof of Lemma \ref{lemma2}] 

Let $\uuu$ consist of all $E=(j_i)_{i=1}^l\in [\nn]^l$ such that there exists $x^*\in B_{X^*}$ so that $(x^*(y_i))_{i=1}^k\in A$ and $(x^*(z_{j_i}))_{i=1}^l\in B$.  By the finite Ramsey theorem, there exists $N\in \infin$ such that either $\uuu\cap [N]^l=\varnothing$ or $[N]^l\subset \uuu$.  In the first case, we let $M=N$ and note that the conclusion is vacuously satisfied.  In the second case, let $\vvv$ consist of those $E=(j_i)_{i=0}^l\in [N]^{l+1}$ such that there exists $x^*\in B_{X^*}$ such that $(x^*(y_i))_{i=1}^k\in A$, $(x^*(z_{j_i}))_{i=1}^l\in B$, and $|x^*(z_{j_0})|\leqslant \ee$.  Applying the finite Ramsey theorem again, there exists $M\in [N]$ such that either $[M]^{l+1}\cap \vvv=\varnothing$ or $[M]^{l+1}\subset \vvv$.  We show that the first alternative cannot hold, so that this $M$ satisfies the conclusion.  Assume that the first alternative holds.  Let $M=(m_i)_{i=1}^\infty$, $m_1<m_2<\ldots$.  Note that for each $1\leqslant  j\in \nn$, $(m_{j+r})_{r=1}^l\in \uuu$.  Therefore there exists $x^*_j\in B_{X^*}$ so that $(x^*_j(y_i))_{i=1}^k\in A$ and $(x^*_j(z_{m_{j+r}}))_{r=1}^l\in B$.  Since for each $1\leqslant i\leqslant j$, $(m_i, m_{j+1}, \ldots, m_{j+l})\notin \vvv$, it must be that $|x^*_j(x_{m_i})|\geqslant \ee$.  Then if $x^*$ is any $w^*$ cluster point of $(x_j^*)$, $|x^*(z_{m_i})|\geqslant \ee$ for all $i\in \nn$, contradicting the weak nullity of $(z_i)$.

\end{proof}

\begin{proof}[Proof of Lemma \ref{lemma1}] Fix $(\delta_n)_{n=0}^\infty \subset (0,\infty)$ decreasing to zero such that $2\sum_{i=k}^\infty\delta_i \leqslant \ee_k$ for each $k\in \nn_0$.   Use Lemma \ref{lemma2} recursively as follows: Assume $1\leqslant n_1<\ldots <n_i$ and $\nn=M_0\supset \ldots \supset M_i\in [\nn]$ have been chosen.  For fixed $1\leqslant j\leqslant q_{i+1}$, let $(A_p, B_p)_{p=1}^r$ be such that $(A_p)_{p=1}^r$ is a cover of $\mathbb{B}^i$ by sets of diameter less than $\delta_{i+1}$ and $(B_p)_{p=1}^r$ is a cover of $\mathbb{B}^j$ by sets of diameter less than $\delta_{i+1}$.  By applying Lemma \ref{lemma2} successively to $A=A_p$ and $B=B_q$ for each $1\leqslant p,q\leqslant r$, we may pass to a subsequence $M^j_{i+1}$ of $M_i$ satisfying the conclusion,  with $\varepsilon=\delta_{i+1}$, of Lemma \ref{lemma2} for each of these pairs.  Of course, we may do this successively for each $1\leqslant j\leqslant q_{i+1}$.  We let $M$ be the sequence obtained by repeating this process for each $1\leqslant j\leqslant q_{i+1}$, $n_{i+1}=\min M$, and $M_{i+1}=M\setminus \{n_{i+1}\}$.  We will show that the subsequence $(x_{n_i})$ resulting from this recursive definition satisfies the conclusion of Lemma \ref{lemma1}.

Fix $k\in \nn_0$, $\varnothing \neq E\subset \nn$ with $k<\min E$ and $|E|\leqslant q_k$.   Fix $x^*_0\in B_{X^*}$.  We choose $(x_i^*)_{i=1}^{\max E}$ recursively as follows: If $x_i^*\in B_{X^*}$ has been chosen, and if $i+1\in \{1, \ldots k\}\cup E$, let $x_{i+1}^*=x_i^*$.  Otherwise, we let $A= A_p\subset \mathbb{B}^i$ and $B=B_q\subset \mathbb{B}^j$ be members of the covers of $\mathbb{B}^i$ and $\mathbb{B}^j$, respectively, from the previous paragraph such that $(x_i^*(x_{n_r}))_{r=1}^i\in A$ and $(x^*_i(x_{n_r}))_{i<r\in E}\in B$, where $j=|E\cap \{i+1, i+2, \ldots\}|$.  Then there exists $x_{i+1}^*\in B_{X^*}$ so that $(x^*_{i+1}(x_{n_r}))_{i=1}^i\in A$, $(x^*_{i+1}(x_{n_r}))_{i<r\in E}\in B$, and $|x^*_{i+1}(x_{n_{i+1}})|\leqslant \delta_{i+1}$. This completes the recursive definition.  Observe that with this definition, for each $1\leqslant i<\max E$ and $j\in \{1, \ldots, i\}\cup (E\cap \{i+1, i+2, \ldots\})$, $|x^*_i(x_{n_j})- x^*_{i+1}(x_{n_j})|\leqslant \delta_{i+1}$.  Then for $j\in \{1, \ldots, k\}\cup E$, $$|(x^*_{\max E}- x^*_0)(x_{n_j})| = |(x^*_{\max E}- x_k^*)(x_{n_j})|\leqslant \sum_{i=k}^{\max E-1}|(x^*_{i+1}- x^*_i)(x_{n_j})|\leqslant \sum_{i=k}^\infty\delta_i \leqslant \ee_k.$$  For $j\leqslant \max E$ with $j\notin \{1, \ldots, k\}\cup E$, $$|x^*_{\max E}(x_{n_j})|\leqslant |x^*_j(x_{n_j})| + \sum_{i=j+1}^{\max E-1} |(x^*_{i+1}-x_i^*)(x_{n_j})|\leqslant \delta_j+\sum_{i=j}^\infty \delta_i \leqslant \ee_j.$$

\end{proof}

\begin{proof}[Proof of Lemma \ref{lemma3}] Assume $\ee<1$.  Fix $\alpha\in (0, 1)$ so that $(1+2\alpha)/(1-\alpha)<1+\ee$. Let $M=\max_{1\leqslant i\leqslant n} \|x_i\|$. By passing to a subsequence, we may assume $(y_i)$ generates the spreading model $(s_i)$, and recall that $(s_i)$ is not equivalent to the canonical $c_0$ basis. Let $g(n)=\|\sum_{i=1}^n s_i\|$.  Note that the properties of spreading models generated by weakly null sequences imply that $g(n)=\|\sum_{i\in A}s_i\|$ for any $A$ with $|A|=n$ and $g(1)\leqslant g(2)\leqslant \ldots$.   Since $(s_i)$ is not equivalent to the $c_0$ basis, $g(n)\to \infty$.  Choose $q\in \nn$ so that $\delta g(q)/8 > nM/\alpha$. By passing to a subsequence of $(y_i)$ and relabelling, we may assume that for any set $B\subset \nn$ with $|B|\leqslant q$, $(y_i)_{i\in B}$ is $2$-equivalent to $(s_i)_{i=1}^{|B|}$.  By \cite[Theorem 3.3]{DOSZ}, we may  assume by passing to a further subsequence that
  $(y_i)$ is $\delta$-Elton unconditional with constant $1+\alpha$. By passing to yet a further subsequence and arguing as in Theorem~ \ref{main theorem}(i), we may assume that for any set $B$ with $|B|\leqslant q$ and any scalars $(a_i)_{i=1}^n$, $(b_i)\in c_{00}$, $\|\sum_{i=1}^n a_ix_i +\sum_{i\in B} b_iy_i\|\leqslant (1+\ee)\|\sum_{i=1}^n a_ix_i+ \sum_i b_iy_i\|$.   

  Fix scalars $(a_i)_{i=1}^n$ and $(b_i)\in c_{00}$ with $\max_i |a_i|, \max_i |b_i|\leqslant 1$.  We consider two cases. For the first case, assume that $\| \sum_i b_iy_i\|\leqslant nM/\alpha$,  and let $A=\{i: |b_i|\geqslant \delta\}$.  We claim that $|A|<q$.  If it were not so, we could choose $B\subset A$ with $|B|=q$.  Then by our choice of $(y_i)$ and the $1$-suppression unconditionality of $(s_i)$,  \begin{align*} nM/\alpha & \geqslant \|\sum b_i y_i\| \geqslant (1/2) \|\sum_{i\in B} b_i y_i \| \\ & \geqslant (1/4)\|\sum_{i\in B} b_i s_i\| \geqslant (\delta/8) g(q),\end{align*} a contradiction.  Since $|A|<q$, $\|\sum_{i=1}^n a_ix_i+ \sum_{i\in A} b_iy_i\|\leqslant (1+\ee)\|\sum_{i=1}^n a_ix_i+\sum b_iy_i\|$ by the last sentence of the previous paragraph.

For the second case, assume that $\| \sum_i b_iy_i\|\geqslant nM/\alpha$, and note that then $$\|\sum_{i=1}^n a_ix_i + \sum_i b_iy_i\| \geqslant \|\sum b_iy_i\| - nM \geqslant (1-\alpha)\|\sum_i b_iy_i\|.$$  Hence \begin{align*} \|\sum_{i=1}^n a_ix_i + \sum_{i\in E} b_i y_i\| & \leqslant nM + \|\sum_{i\in E} b_i y_i\| \leqslant (1+2\alpha) \|\sum_i b_i y_i\| \\ & \leqslant (1+2\alpha)/(1-\alpha)\|\sum_{i=1}^n a_ix_i + \sum_i b_iy_i\| \\ & \leqslant (1+\ee)\|\sum_{i=1}^n a_ix_i + \sum_i b_iy_i\|. \end{align*}

\end{proof}

\section{Applications to quasi-greedy bases}

Fix a seminormalized basis $(e_i)$ for a Banach space $X$.  For $n\in \nn$, $x=\sum a_ie_i$, and $n\in \nn$, we let $G_n(x)=\sum_{i\in A} a_ie_i$, where $|A|=n$ and $\min_{i\in A} |a_i|\geqslant \max_{i\in \nn\setminus A}|a_i|$.  Of course, such a set $A$ will not be uniquely defined in general.  Recall that the basis $(e_i)$ is said to be \emph{quasi-greedy} provided there exists a constant $C_{qg}$ such that for all $x\in X$ and $n\in \nn$, $\|G_n(x)\|\leqslant C_{qg} \|x\|$. This definition was introduced by Konyagin and Temlyakov \cite{KT}
 and was shown to be equivalent to norm convergence of $(G_n(x))$ to $x$, for each $x \in X$,  by Wojtaszczyk \cite{W}. In order to check that $(e_i)$ is quasi-greedy with constant $C$ it clearly suffices to prove that for each $a>0$ and $x\in X$, $\|\mathcal{G}_a(x)\|\leqslant C\|x\|$, where $\mathcal{G}_a(x)=\sum_{i:|a_i|\geqslant a} a_ie_i$ and $x=\sum a_ie_i$.

\begin{theorem} Let $(x_n)$ be a seminormalized, weakly null sequence so that no subsequence generates a spreading model equivalent to the canonical $c_0$ basis.  Then for any $\ee>0$, there exists a subsequence $(y_n)$ of $(x_n)$ which is a quasi-greedy basis for its span with constant $(1+\ee)$.

\end{theorem}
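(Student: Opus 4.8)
The plan is to build $(y_n)$ so that the thresholding operators obey $\|\mathcal{G}_a(x)\|\le(1+\ee)\|x\|$ for every $x$ in the closed span and every $a>0$; by the remark preceding the statement this already gives that $(y_n)$ is quasi-greedy with constant $1+\ee$. First I would make two routine reductions. For $x=\sum a_iy_i$ one has $a_i\to 0$, so $\{i:|a_i|\ge a\}$ is finite and $\mathcal{G}_a(x)=\mathcal{G}_a\bigl(\sum_{i=1}^N a_iy_i\bigr)$ for all large $N$, while $\|\sum_{i=1}^N a_iy_i\|\to\|x\|$; hence it suffices to treat finitely supported $x$. Replacing $x$ by $x/\sup_i|a_i|$ and $a$ by $a/\sup_i|a_i|$ changes neither side of the target inequality, so I may also assume $\sup_i|a_i|=1$. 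The purpose of this normalization is that once $(y_n)$ is $(1+\eta)$-basic and (being seminormalized) satisfies $\inf_n\|y_n\|\ge c>0$ and $\sup_n\|y_n\|\le M$, the coordinate $j$ with $|a_j|=1$ forces $\|x\|\ge\|y_j\|/\bigl(2(1+\eta)\bigr)\ge c/\bigl(2(1+\eta)\bigr)=:c_0>0$, a lower bound independent of $x$ and $a$.

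Next I would fix $\eta>0$ small, set $\ee'=\ee/3$, and choose a decreasing sequence $(\delta_n)_{n\ge0}\subset(0,1)$ with $M\sum_{n\ge0}\delta_n<(\ee-\ee')c_0$. Then, applying \cite{BP} and afterwards Theorem~\ref{main theorem}(ii) (applicable since its hypothesis on the sequence passes to subsequences), pass to a subsequence $(y_n)$ of $(x_n)$ that is $(1+\eta)$-basic and satisfies the conclusion of Theorem~\ref{main theorem}(ii) with $\ee'$ in place of $\ee$ and with this $(\delta_n)$; since subsequences of $(1+\eta)$-basic sequences are again $(1+\eta)$-basic, the second passage preserves the first property.

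The main computation is then short. Fix finitely supported $x=\sum a_iy_i$ with $\sup_i|a_i|=1$ and fix $a>0$. Put $n_*=\min\{m\in\nn_0:\delta_m\le a\}$, $G=\{i:|a_i|\ge a\}$, and $G_{>}=\{i\in G:i>n_*\}$. For $i\in G_{>}$ we have $|a_i|\ge a\ge\delta_{n_*}$, so Theorem~\ref{main theorem}(ii) applies with $n=n_*$ and $E=G_{>}$ and gives $\bigl\|\sum_{i=1}^{n_*}a_iy_i+\sum_{i\in G_{>}}a_iy_i\bigr\|\le(1+\ee')\|x\|$. Since $\mathcal{G}_a(x)=\bigl(\sum_{i=1}^{n_*}a_iy_i+\sum_{i\in G_{>}}a_iy_i\bigr)-R$ with $R:=\sum_{i\le n_*,\,|a_i|<a}a_iy_i$, it remains to estimate $R$. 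Here is the point: every coefficient of $R$ is tiny, because if $n_*\ge1$ then $\delta_{n_*-1}>a$ by minimality of $n_*$, so for $1\le i\le n_*$ one gets $|a_i|<a<\delta_{n_*-1}\le\delta_{i-1}$, whence $\|R\|\le\sum_{i=1}^{n_*}\delta_{i-1}\|y_i\|\le M\sum_{j\ge0}\delta_j<(\ee-\ee')c_0\le(\ee-\ee')\|x\|$. Adding the two inequalities yields $\|\mathcal{G}_a(x)\|\le(1+\ee)\|x\|$.

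The one step that needs an idea rather than bookkeeping is the treatment of this ``co-greedy remainder'' $R$. Theorem~\ref{main theorem}(ii) lets one keep a full initial segment together with a high-coefficient tail set $G_{>}$ (and it is part (ii), not part (i), that is needed, since $G_{>}$ may be large), but then one is forced to carry the low-coefficient part $R$ of that initial segment. The observation is that the defining relation $\delta_{n_*}\le a<\delta_{n_*-1}$ pins each coefficient of $R$ below $\delta_{i-1}$, so a summable, sufficiently small choice of $(\delta_n)$ makes $\|R\|$ small in an \emph{absolute} sense — and an absolute bound is enough precisely because the normalization $\sup_i|a_i|=1$ keeps $\|x\|\ge c_0$. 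Everything else (the two reductions, the requirement $\sup_i|a_i|\le1$ in Theorem~\ref{main theorem}(ii), the hereditarity of the basis constant) is routine.
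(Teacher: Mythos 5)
Your argument is correct in substance but takes a genuinely different route from the paper's. The paper applies Theorem~\ref{main theorem}$(i)$ (the Schreier‐type result) rather than $(ii)$: it normalizes $\|x\|=1$, picks $k$ so that $2a\in[\ee/(k+1),\ee/k)$, and must then show the high‐coefficient tail set $B=\{i>k:|a_i|\ge a\}$ has cardinality at most $q_k$, which it does by choosing $(q_n)$ in advance so that $g(q_k)\ee>32(k+1)$ and using the spreading model growth $g(n)\to\infty$ together with $2$-equivalence of short gap subsequences to $(s_i)$. Its co-greedy remainder is $\sum_{i\le k,\,|a_i|<a}a_iy_i$, which it bounds crudely by $ka<\ee/2$ since all coefficients there are below $a$ and $k$ was tied to $a$. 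You instead invoke part $(ii)$, which already absorbs the spreading‐model hypothesis (so no cardinality bound on the tail set is needed), normalize $\sup_i|a_i|=1$ to get an absolute lower bound $\|x\|\ge c_0$, and make the remainder small by a summability condition on $(\delta_n)$. Both are clean; yours avoids the explicit $g(q_k)$ bookkeeping and the size estimate on $B$, at the cost of relying on the stronger part $(ii)$ whose proof is itself more involved (Elton unconditionality plus Lemma~\ref{lemma3}). The paper's choice to use $(i)$ is arguably the more economical application, but your version is a perfectly valid alternative.

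One small point worth flagging: your definition $n_*=\min\{m\in\nn_0:\delta_m\le a\}$ can yield $n_*=0$, and as literally stated Theorem~\ref{main theorem}$(ii)$ is quantified over $n\in\nn$, not $n\in\nn_0$. This is almost certainly a harmless slip in the paper's statement (the $n=0$ case is just $(1+\ee)$-Elton unconditionality with threshold $\delta_0$, which is what Lemma~\ref{lemma3} with $n=0$ delivers), but to argue only from the theorem as written you should either replace $n_*$ by $\max(1,n_*)$ and absorb the single extra coordinate $a_1y_1$ into $R$ using $|a_1|<\delta_0$, or explicitly note that the $n=0$ case follows from the theorem's own ingredients. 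Everything else — the scaling invariance of the target inequality, the biorthogonal bound $\|x\|\ge\|y_j\|/(2(1+\eta))$, the containment $G_{>}\subset\{i>n_*:|a_i|\ge\delta_{n_*}\}$, and the estimate $|a_i|<a<\delta_{n_*-1}\le\delta_{i-1}$ for $i\le n_*$ — checks out.
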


\begin{remarks} (a) The weaker result  with $1+\ee$ replaced by $3+\ee$ was  
shown in \cite[Theorem 5.4]{DKK} (cf.\ \cite[Corollary 3.4]{DOSZ}).

 (b) The result fails if we omit the hypothesis concerning $c_0$ spreading models.  In fact, in \cite[p. 59]{DOSZ}, an equivalent norm on $c_0$ is given so that the unit vector basis has no quasi-greedy subsequence with constant less than $8/7$. 

(c) The constant of $1 + \varepsilon$  is optimal, even in a Hilbert space. Indeed, it  was proved in \cite[Corollary 2.3]{GW} that a normalized basis $(x_n)$ of a Hilbert space  is quasi-greedy with $C_{qg}=1$ if and only if $(x_n)$ is
orthonormal. Hence, if $\langle x_n, x_m \rangle > 0$ for all $n,m$, it follows that $(x_n)$ does not admit a subsequence with $C_{qg}=1$. Clearly, such an $(x_n)$  will also satisfy the spreading model assumption of the theorem. 

(d) It was proved recently \cite{AA} that a semi-normalized basis $(x_n)$ of a Banach space is quasi-greedy with $C_{qg}=1$ if and only if 
$(x_n)$ is $1$-suppression unconditional.

\end{remarks}

\begin{proof} Assume $\ee<1$. By passing to a subsequence, scaling, and passing to a closely equivalent norm, we may assume $(x_n)$ is normalized and monotone. As above, we may assume $(x_n)$ generates a spreading model $(s_n)$, and we let $g(n)=\|\sum_{i=1}^n s_i\|$, $g(0)=0$.   Choose $(q_n)_{n=0}^\infty \subset \nn$ so that $g(q_k)\ee >32 (k+1)$ for all $k\in \nn_0$.  By passing to a subsequence, we may assume that for any $k\in \nn_0$ and any $\varnothing \neq C\subset \nn$ with $k< \min C$ and $|C|\leqslant q_k$, $(x_n)_{n\in C}$ is $2$-equivalent to $(s_n)_{n\in B}$.     Let $(y_i)=(x_{n_i})$, where $(x_{n_i})$ satisfies the conclusion of Theorem \ref{main theorem}$(i)$ with this choice of $(q_n)_{n=0}^\infty$ and $\ee$ replaced by $\ee/2$.  

Fix $x=\sum a_iy_i$ with $\|x\|=1$ and fix $a>0$.  Choose $k\in \nn_0$ so that $2a\in [\ee/(k+1), \ee/k)$, where $\ee/0:=\infty$.   Write $G_a(x)= \sum_{i\in A}a_iy_i + \sum_{i\in B} a_iy_i$, where $A=\{i:i\leqslant k, |a_i|\geqslant a\}$ and $B= \{i: i> k, |a_i|\geqslant a\}$.  We claim that $|B|\leqslant q_k$.   If it were not so, we could choose $C\subset B$ with $|C|=q_k$.  Then \begin{align*} 1 & = \|x\| \geqslant \frac{1}{2} \|\sum_{i=1}^k a_i y_i + \sum_{i\in C}a_i y_i\| \\ & \geqslant \frac{1}{4} \|\sum_{i\in C} a_i y_i\| \geqslant \frac{1}{8} \|\sum_{i\in C} a_i s_i\| \\ & \geqslant \frac{a g(q_k)}{16}  \geqslant \frac{g(q_k) \ee}{32 (k+1)}>1, \end{align*} a contradiction.  Thus $|B|\leqslant q_k$, as claimed.  Then \begin{align*} \|G_a(x)\| & = \|\sum_{i=1}^k a_i y_i + \sum_{i\in B} a_i y_i - \sum_{i\in \{1, \ldots, k\}\setminus A} a_i y_i\|  \leqslant (1+\ee/2) \|x\| + ka \\ & \leqslant 1+\ee/2 +\ee/2 = 1+\ee. \end{align*}

\end{proof}

\begin{remark}

Recall that for $\Delta\geqslant 1$, a basic sequence $(e_i)$ is \emph{democratic} if for any finite sets $A, B\subset \nn$ with $|A|\leqslant |B|$, $\|\sum_{i\in A} e_i\|\leqslant \Delta \|\sum_{i\in B}e_i\|$.  It was shown in \cite[Proposition 5.3]{DKK} that any seminormalized, weakly null sequence having no subsequence generating a spreading model equivalent to the canonical $c_0$ basis has a $(1+\ee)$ democratic subsequence.  Recall also that a basic sequence $(e_i)$ is said to be \emph{almost greedy} provided there exists a constant $C_a$ so that for any $x\in X$, $\|x-G_n(x)\|\leqslant C_a \tilde{\sigma}_n(x)$, where $$\tilde{\sigma}_n(x)= \inf_{|A|\leqslant n} \|x-\sum_{i\in A} e_i^*(x)e_i\|.$$  It was also shown in \cite[Theorem 3.3]{DKKT} that a basic sequence is almost greedy if and only if it is quasi-greedy and democratic, and the constant $C_a$ of $(e_i)$ does not exceed $8C^4 \Delta + C+1$ if $(e_i)$ is quasi-greedy with constant $C$ and democratic with constant $\Delta$.  Thus we have shown that any seminormalized, weakly null sequence having no subsequence generating a spreading model equivalent to the canonical $c_0$ basis has a subsequence which is almost greedy with constant $10+\ee$.

\end{remark}

\end{document}